\documentclass[11pt, reqno]{amsart}
\usepackage{amsmath, amsthm, amscd, amsfonts, amssymb, graphicx, color}
\usepackage[bookmarksnumbered, colorlinks, plainpages]{hyperref}
\input{mathrsfs.sty}

\textheight 22.5truecm \textwidth 15truecm
\setlength{\oddsidemargin}{0.35in}\setlength{\evensidemargin}{0.35in}

\setlength{\topmargin}{-.5cm}

\newtheorem{theorem}{Theorem}[section]

\newtheorem{proposition}[theorem]{Proposition}
\newtheorem{corollary}[theorem]{Corollary}
\theoremstyle{definition}
\newtheorem{definition}[theorem]{Definition}

\theoremstyle{remark}
\newtheorem{remark}[theorem]{Remark}

\begin{document}

\title[Epigraph of Operator Functions]{Epigraph of Operator Functions}

\author[ M. Kian]{ Mohsen  Kian }

\address{Mohsen Kian:\ \
 Department of Mathematics, Faculty of Basic Sciences, University of Bojnord, P. O. Box
1339, Bojnord 94531, Iran}
\address{School of Mathematics, Institute for Research in Fundamental Sciences (IPM), P.O. Box: 19395-5746, Tehran, Iran.}
\email{\textcolor[rgb]{0.00,0.00,0.84}{kian@ub.ac.ir \ and \ kian@member.ams.org}}

\subjclass[2010]{Primary 46L89 ; Secondary 52A01, 46L08}

\keywords{Epigraph, convex hull, $C^*$-convexity, operator convex, operator log-convex}


\begin{abstract}
It is known that a real function $f$ is convex if and only if the set
$$\mathrm{E}(f)=\{(x,y)\in\mathbb{R}\times\mathbb{R};\ f(x)\leq y\},$$
the  epigraph of  $f$  is a convex set in $\mathbb{R}^2$.
We state an extension of this result  for operator convex functions and $C^*$-convex sets as well as operator log-convex functions and $C^*$-log-convex sets. Moreover, the $C^*$-convex hull of a Hermitian matrix has been  represented in terms of its eigenvalues.
\end{abstract} \maketitle
\section{Introduction and preliminaries}
\noindent
Throughout the paper,  assume that $\mathcal{B}(\mathcal{H})$ is the $C^*$-algebra of all bounded linear operators on a Hilbert space $\mathcal{H}$. If $dim(\mathcal{H})=n$, then we identify  $\mathcal{B}(\mathcal{H})$ with $\mathbb{M}_n$, the algebra of all $n\times n$ matrices. We denote by $\mathbb{H}_n$ the set of all Hermitian matrices in $\mathbb{M}_n$. An operator $X\in\mathcal{B}(\mathcal{H})$ is called positive (denoted by $X\geq0$) if $\langle Xa,a\rangle\geq 0$ for every $a\in\mathcal{H}$. If in addition $X$ is invertible, then it is called strictly positive(denoted by $X>0$). We denote by  $\mathcal{B}(\mathcal{H})^+$  the set of all strictly positive operators on  $\mathcal{H}$.

For a real interval $J$, we mean by $sp(J)$ the set of all self-adjoint operators on $\mathcal{H}$ whose spectra
 are contained in $J$. A continuous function $f:J\to\mathbb{R}$ is said to be operator convex if $f\left(\frac{X+ Y}{2}\right)\leq \frac{f(X)+ f(Y)}{2}$ for all $X,Y\in sp(J)$.
It is well known that \cite{FMPS,HP}  $f$ is operator convex if and only if  the Jensen operator inequality
\begin{align}\label{jensen}
f(C^*XC)\leq C^*f(X)C
\end{align}
holds  for every isometry $C$ and every $X\in sp(J)$. A continuous function $f:(0,\infty)\to(0,\infty)$ is called operator log-convex \cite{AH,kian-dr} if $f\left(\frac{X+ Y}{2}\right)\leq f(X)\sharp f(Y)$ for  all strictly positive operators $X,Y$, where the  geometric  mean $\sharp$ is  defined by $X\sharp Y=X^{\frac{1}{2}}\left(X^{-\frac{1}{2}}YX^{-\frac{1}{2}}\right)^{\frac{1}{2}}X^{\frac{1}{2}}$ for all strictly positive operators $X$ and $Y$, see for example \cite{FMPS}. In the case where $f$ is operator log-convex a sharper inequality than \eqref{jensen} is valid as
\begin{align}\label{jensen2}
f\left(\sum_{i=1}^{n}C_i^*X_iC_i\right)\leq \left(\sum_{i=1}^{n}C_i^*f(X_i)^{-1}C_i\right)^{-1}
\end{align}
 for all $X_1,\cdots,X_n>0$ and all  $C_1,\cdots,C_n\in\mathcal{B}(\mathcal{H})$  with $\sum_{i=1}^{n}C_i^*C_i=I$, see \cite[Corollary 3.13]{kian-dr}.

 A set $\mathcal{K}\subseteq\mathcal{B}(\mathcal{H})$ is called $C^*$-convex if $X_1,\cdots,X_n\in \mathcal{K}$ and
  $C_1,\cdots,C_n\in\mathcal{B}(\mathcal{H})$ with $\sum_{i=1}^{n}C_i^*C_i=I$ implies that $\sum_{i=1}^{n}C_i^*X_iC_i\in\mathcal{K}$. This kind of convexity has been introduced by  Loebl and Paulsen \cite{Loebl} as a non-commutative generalization of linear convexity and has been studied by many authors, see e.g.  \cite{FZ,Morenz, WW} and references therein. Typical examples of $C^*$-convex  sets are $\{T\in\mathcal{B}(\mathcal{H}):\,0\leq T\leq I\}$
 and $\{T\in\mathcal{B}(\mathcal{H});\,\|T\|\leq M\}$ for a fix scalar $M>0$.
 It is evident that the $C^*$-convexity of a set $\mathcal{K}$ in $\mathcal{B}(\mathcal{H})$  implies its  convexity in the usual sense. For if $X,Y\in\mathcal{K}$ and $\lambda\in[0,1]$, then with $C_1=\sqrt{\lambda}I$ and $C_2=\sqrt{1-\lambda}I$ we have $C_1^*C_1+C_2^*C_2=I$ and
  $$\lambda X+(1-\lambda)Y= C_1^*XC_1+C_2^*YC_2\in\mathcal{K}.$$
  But the converse is not true in general. For example   if $A\geq0$, then $$[0,A]=\left\{X\in\mathcal{B}(\mathcal{H});\ \ 0\leq X\leq A\right\}$$
   is convex but not $C^*$-convex \cite{Loebl}.
   The concept of $C^*$-convexity can be generalized to the sets which have a $\mathcal{B}(\mathcal{H})$-module structure. Assume that  $\mathcal{M}$ is a $\mathcal{B}(\mathcal{H})$-module. We say that a subset $\mathcal{K}$ of $\mathcal{M}$ is  $C^*$-convex whenever $X_1,\cdots,X_n\in\mathcal{K}$, $C_1,\cdots,C_n\in\mathcal{B}(\mathcal{H})$ and $\sum_{i=1}^{n}C_i^*C_i=I$ implies that $\sum_{i=1}^{n}C_i^*X_iC_i\in\mathcal{K}$. For example, let
   $$\mathcal{M}=\left\{(X_1,\cdots,X_k);\ X_j\in\mathcal{B}(\mathcal{H}),\ j=1,\cdots,k\right\}.$$
    Then $\mathcal{M}$ is a $\mathcal{B}(\mathcal{H})$-module under
   \begin{align*}
     \mathcal{M}\times\mathcal{B}(\mathcal{H})&\to\mathcal{M}\qquad ((X_1,\cdots,X_k),T)\mapsto (X_1T,\cdots,X_kT)\\
     \mathcal{B}(\mathcal{H})\times\mathcal{M}&\to\mathcal{M}\qquad (S,(X_1,\cdots,X_k))\mapsto (SX_1,\cdots,SX_k).
   \end{align*}
Now,  $\mathcal{K}\subseteq\mathcal{M}$ is called $C^*$-convex if  $X_i=(X_{i1},\cdots,X_{ik})\in\mathcal{K}$,  $C_i\in\mathcal{B}(\mathcal{H})$\ $(i=1,\cdots,n)$ and $\sum_{i=1}^{n}C_i^*C_i=I$ implies that
$$\sum_{i=1}^{n}C_i^*X_iC_i
=\left(\sum_{i=1}^{n}C_i^*X_{i1}C_i,\cdots,\sum_{i=1}^{n}C_i^*X_{ik}C_i\right)\in\mathcal{K}.$$
As an example, it is easy to see that $\mathcal{K}=\left\{(X_1,\cdots,X_k)\in\mathcal{M};\ \ 0\leq X_j\leq I,\ j=1,\cdots,k\right\}$ is $C^*$-convex.

The epigraph of a real function $f$ is defined to be the set
$$\mathrm{E}(f)=\{(x,y)\in\mathbb{R}\times\mathbb{R};\ f(x)\leq y\}.$$
It is known that $f$ is a convex function if and only if $\mathrm{E}(f)$ is a convex set in $\mathbb{R}^2$.
The main purpose of this paper is to present  this result for operator functions. In particular, we give the connection between operator convex functions and $C^*$-convex sets as well as operator log-convex functions and $C^*$-log-convex sets. It is also shown that the $C^*$-convex hull of a Hermitian matrix can be represented in terms of its eigenvalues.

\section{Main Result}
For a continuous real function  $f:J\to\mathbb{R}$, we define the operator epigraph of $f$ by
\begin{eqnarray*}
  \mathrm{OE}(f):=\{(X,Y)\in sp(J)\times\mathcal{B}(\mathcal{H});\  f(X)\leq Y\}.
\end{eqnarray*}
The next result gives the connections between operator convex functions and $C^*$-convex sets.
\begin{theorem}\label{t1}
A continuous  function $f:J\to\mathbb{R}$ is operator convex if and only if  $\mathrm{OE}(f)$ is $C^*$-convex.
\end{theorem}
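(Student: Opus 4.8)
The plan is to prove the two implications separately. The reverse direction is an easy specialization, while the forward direction rests on upgrading the isometric Jensen inequality \eqref{jensen} to several variables via a diagonal dilation.

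For the reverse direction, I would assume $\mathrm{OE}(f)$ is $C^*$-convex and fix $X,Y\in sp(J)$. Since $(X,f(X))$ and $(Y,f(Y))$ obviously belong to $\mathrm{OE}(f)$, taking the scalar contractions $C_1=C_2=\frac{1}{\sqrt2}I$ (so that $C_1^*C_1+C_2^*C_2=I$) and invoking $C^*$-convexity in the module $sp(J)\times\mathcal B(\mathcal H)$ gives
$$\left(\frac{X+Y}{2},\ \frac{f(X)+f(Y)}{2}\right)=C_1^*(X,f(X))C_1+C_2^*(Y,f(Y))C_2\in\mathrm{OE}(f),$$
which says exactly that $f\!\left(\frac{X+Y}{2}\right)\leq\frac{f(X)+f(Y)}{2}$, i.e.\ $f$ is operator convex.

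For the forward direction, I would assume $f$ operator convex, take $(X_i,Y_i)\in\mathrm{OE}(f)$ and $C_1,\dots,C_n\in\mathcal B(\mathcal H)$ with $\sum_i C_i^*C_i=I$, and aim to show $f\big(\sum_i C_i^*X_iC_i\big)\leq\sum_i C_i^*Y_iC_i$. The key step is a standard diagonal-dilation argument: set $\widetilde X=\mathrm{diag}(X_1,\dots,X_n)$ on $\mathcal H^{(n)}=\mathcal H\oplus\cdots\oplus\mathcal H$ and let $C\colon\mathcal H\to\mathcal H^{(n)}$ be the column operator with entries $C_1,\dots,C_n$, so that $C^*C=\sum_i C_i^*C_i=I$ makes $C$ an isometry. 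Since the spectrum of $\widetilde X$ is the union of the spectra of the $X_i$, we have $\widetilde X\in sp(J)$; moreover $C^*\widetilde XC=\sum_i C_i^*X_iC_i$ and $f(\widetilde X)=\mathrm{diag}(f(X_1),\dots,f(X_n))$. Then \eqref{jensen} applied to the isometry $C$ yields
$$f\Big(\sum_{i=1}^n C_i^*X_iC_i\Big)=f(C^*\widetilde XC)\leq C^*f(\widetilde X)C=\sum_{i=1}^n C_i^*f(X_i)C_i,$$
and since $f(X_i)\leq Y_i$ and each congruence $A\mapsto C_i^*AC_i$ is monotone, the right-hand side is dominated by $\sum_i C_i^*Y_iC_i$; chaining the inequalities finishes this direction.

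I expect the only subtle point to be the bookkeeping that keeps the first coordinate admissible, namely that $\sum_i C_i^*X_iC_i$ again lies in $sp(J)$ so that $f$ may be applied to it. This follows from $C^*C=I$ together with the spectral bounds defining $sp(J)$---if $aI\le\widetilde X\le bI$ then $aI=aC^*C\le C^*\widetilde XC\le bC^*C=bI$---with a little extra care required only when $J$ fails to be closed. Everything else, the diagonal-dilation promotion of \eqref{jensen} and the monotonicity of congruence, is routine.
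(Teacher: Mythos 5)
Your proof is correct and follows essentially the same route as the paper: the forward direction rests on the multivariable Jensen operator inequality $f\left(\sum_i C_i^*X_iC_i\right)\leq\sum_i C_i^*f(X_i)C_i$ (which the paper simply cites and you re-derive from \eqref{jensen} by the standard diagonal dilation), combined with $f(X_i)\leq Y_i$ and monotonicity of congruence, while the converse specializes to the points $(X_i,f(X_i))\in\mathrm{OE}(f)$. The only cosmetic difference is that in the converse you take $C_1=C_2=\tfrac{1}{\sqrt{2}}I$ to recover midpoint operator convexity directly, whereas the paper deduces the full Jensen inequality and appeals to its known equivalence with operator convexity.
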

\begin{proof}
  Let $f$ be operator convex. Let $(X_i,Y_i)\in \mathrm{OE}(f)$ \ $(i=1,\cdots,n)$ and $C_i\in\mathcal{B}(\mathcal{H})$ with $\sum_{i=1}^{n}C_i^*C_i=I$. Therefore $f(X_i)\leq Y_i$ \ $(i=1,\cdots,n)$ and so we have by the Jensen operator inequality that
  \begin{eqnarray*}
    f\left(\sum_{i=1}^{n}C_i^*X_i C_i\right)\leq\sum_{i=1}^{n}C_i^*f(X_i)C_i\leq\sum_{i=1}^{n}C_i^*Y_iC_i.
  \end{eqnarray*}
In other words, $\sum_{i=1}^{n}C_i^*(X_i,Y_i)C_i\in \mathrm{OE}(f)$ and so $\mathrm{OE}(f)$ is $C^*$-convex.

Conversely, assume that $\mathrm{OE}(f)$ is $C^*$-convex. For all $X_1,\cdots,X_n\in\mathcal{B}(\mathcal{H})$, we have  from the definition of $\mathrm{OE}(f)$ that $(X_i,f(X_i))\in \mathrm{OE}(f)$. If $C_i\in\mathcal{B}(\mathcal{H})$ with   $\sum_{i=1}^{n}C_i^*C_i=I$,
then $\sum_{i=1}^{n}C_i^*(X_i,f(X_i))C_i\in \mathrm{OE}(f)$ by the $C^*$-convexity of $\mathrm{OE}(f)$. It follows that
\begin{eqnarray*}
 f\left(\sum_{i=1}^{n}C_i^*X_iC_i\right)\leq\sum_{i=1}^{n}C_i^*f(X_i)C_i,
\end{eqnarray*}
which means that $f$ is operator convex.
\end{proof}
Let $\{f_{\alpha};\ \alpha\in\Gamma\}$     be a family of operator convex functions and $M_{\alpha}\in \mathbb{R}$ for every $\alpha\in\Gamma$. By Theorem \ref{t1}, the set  $\{X\in\mathcal{B}(\mathcal{H}); \ f_{\alpha}(X)\leq M_{\alpha},  \ \forall\alpha\in\Gamma\}$ is $C^*$-convex. For example, consider the family $f_{\alpha}$ where $f_{\alpha}(t)=t^{\alpha}$   and $\alpha\in[1,2]$. Then $\{X\in\mathcal{B}(\mathcal{H}); \ X^{\alpha}\leq M_{\alpha},  \ 1\leq \alpha\leq2\}$ is $C^*$-convex.

The Choi--Davis--Jensen inequality for an operator convex function $f:J\to\mathbb{R}$ asserts  that $f(\Phi(X))\leq\Phi(f(X))$ for every unital positive linear mapping $\Phi$ on $\mathcal{B}(\mathcal{H})$ and every $X\in sp(J)$, see \cite{Choi, Davis, FMPS}.  Motivating by this result, we state a characterization for $C^*$-convex sets in $\mathbb{H}_n$ using positive linear mappings.
\begin{theorem}\label{th2}
  If  $\mathcal{K}\subseteq\mathbb{H}_n$,  then the followings are equivalent:
  \begin{enumerate}
    \item $\mathcal{K}$ is $C^*$-convex;\\
    \item $\sum_{i=1}^{m}\Phi_i(X_i)\in \mathcal{K}$  for every $X_i\in \mathcal{K}$, \ $(i=1,\cdots,m)$ and every unital family $\{\Phi_i; \ i=1,\cdots,m\}$  of positive linear mappings on $\mathbb{M}_n$.
      \end{enumerate}
\end{theorem}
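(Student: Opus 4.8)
The plan is to prove the two implications separately, with $(2)\Rightarrow(1)$ routine and $(1)\Rightarrow(2)$ carrying all the content. For $(2)\Rightarrow(1)$, given $X_1,\dots,X_n\in\mathcal{K}$ and $C_1,\dots,C_n\in\mathbb{M}_n$ with $\sum_{i=1}^n C_i^*C_i=I$, I would set $\Phi_i(A)=C_i^*AC_i$. Each $\Phi_i$ is linear and positive, and the family is unital since $\sum_i\Phi_i(I)=\sum_i C_i^*C_i=I$; hence $(2)$ gives $\sum_i C_i^*X_iC_i=\sum_i\Phi_i(X_i)\in\mathcal{K}$, which is exactly the $C^*$-convexity of $\mathcal{K}$.

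For the converse the obstacle is that a positive linear map need not be completely positive, so I cannot invoke a Kraus-type representation $\Phi_i(A)=\sum_k A_{ik}^*AA_{ik}$ to reduce a sum $\sum_i\Phi_i(X_i)$ directly to a $C^*$-convex combination. The device that avoids this is to pass through the spectral decompositions of the $X_i$. First I would record an elementary fact: every eigenvalue $\lambda$ of a matrix $A\in\mathcal{K}$ forces the scalar matrix $\lambda I$ into $\mathcal{K}$. Indeed, taking a unit eigenvector $v$ with $Av=\lambda v$ and the rank-one operators $C_k=v e_k^*$ for an orthonormal basis $\{e_k\}$, one checks that $\sum_k C_k^*C_k=I$ and $\sum_k C_k^*AC_k=\lambda I$, so $C^*$-convexity yields $\lambda I\in\mathcal{K}$.

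Then, writing each $X_i=\sum_j\lambda_{ij}P_{ij}$ in spectral form with $\sum_j P_{ij}=I$, I would expand $\sum_i\Phi_i(X_i)=\sum_{i,j}\lambda_{ij}Q_{ij}$, where $Q_{ij}:=\Phi_i(P_{ij})\ge0$. Unitality of the family together with $\sum_j P_{ij}=I$ gives $\sum_{i,j}Q_{ij}=\sum_i\Phi_i(I)=I$, so the operators $C_{ij}:=Q_{ij}^{1/2}$ satisfy $\sum_{i,j}C_{ij}^*C_{ij}=I$. Consequently
$$\sum_i\Phi_i(X_i)=\sum_{i,j}C_{ij}^*(\lambda_{ij}I)C_{ij}$$
exhibits $\sum_i\Phi_i(X_i)$ as a $C^*$-convex combination of the scalar matrices $\lambda_{ij}I$, each of which lies in $\mathcal{K}$ by the previous paragraph; $C^*$-convexity then delivers $\sum_i\Phi_i(X_i)\in\mathcal{K}$.

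The heart of the argument, and its only delicate point, is the spectral reduction just described: it replaces the operator-coefficient representation of a general positive map, which is unavailable, by the real scalars $\lambda_{ij}$ and the positive operator weights $Q_{ij}=\Phi_i(P_{ij})$. This is precisely where positivity (as opposed to complete positivity) of the $\Phi_i$ is used, and it is exactly what is needed to assemble the legitimate $C^*$-convex combination above.
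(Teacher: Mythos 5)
Your proof is correct and follows essentially the same route as the paper: the easy direction via $\Phi_i(A)=C_i^*AC_i$, and the hard direction by first showing $\lambda I\in\mathcal{K}$ for each eigenvalue $\lambda$ of a member of $\mathcal{K}$ and then writing $\sum_i\Phi_i(X_i)=\sum_{i,j}\Phi_i(P_{ij})^{1/2}(\lambda_{ij}I)\Phi_i(P_{ij})^{1/2}$ as a $C^*$-convex combination of those scalar matrices. The only (cosmetic) difference is that you establish $\lambda I\in\mathcal{K}$ with the rank-one operators $C_k=ve_k^*$ built from a unit eigenvector, whereas the paper diagonalizes by a unitary and uses the matrix units $E_{ki}$.
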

\begin{proof}
Assume that $\mathcal{K}\subseteq\mathbb{H}_n$ is   $C^*$-convex. First note that if $X\in\mathcal{K}$ and if $\lambda$ is an eigenvalue of $X$, then $\lambda I\in\mathcal{K}$. Indeed, it follows from the spectral decomposition that there exists a unitary $U$ such that   $U^*XU$ is  a matrix such that $\lambda$ is its $k,k$ entry.  Then  $\lambda I=\sum_{i=1}^{n}E_{ki}^*U^*XUE_{ki}\in\mathcal{K}$, where $\{E_{ij}\}$ is the system of unit matrices.
Now let   $X_i\in \mathcal{K}$, \ $(i=1,\cdots,m)$ and let $\{\Phi_i; \ i=1,\cdots, m\}$  be a unital family  of positive linear mappings on $\mathbb{M}_n$. Assume that $X_i=\sum_{j=1}^{n}\lambda_{ij}P_{ij}$ be the spectral decomposition of $X_i$ for $i=1,\cdots, m$ so that   $\lambda_{ij} I\in \mathcal{K}$ for all $i,j$. Therefore
\begin{align*}
 \sum_{i=1}^{m}\Phi_i(X_i)=\sum_{i=1}^{m}\Phi_i\left(\sum_{j=1}^{n}\lambda_{ij}P_{ij}\right)
 = \sum_{i=1}^{m}\sum_{j=1}^{n}\lambda_{ij}\Phi_i(P_{ij})
 = \sum_{i=1}^{m}\sum_{j=1}^{n}C_{ij}^*\lambda_{ij}C_{ij},
\end{align*}
where $C_{ij}=\sqrt{\Phi_i(P_{ij})}$. Taking into account that
\begin{eqnarray*}
\sum_{i=1}^{m}\sum_{j=1}^{n}C_{ij}^*C_{ij}=\sum_{i=1}^{m}\sum_{j=1}^{n}\Phi_i(P_{ij})=
\sum_{i=1}^{m}\Phi_i\left(\sum_{j=1}^{n}P_{ij}\right)=\sum_{i=1}^{n}\Phi_i(I)=I,
\end{eqnarray*}
we get by the $C^*$-convexity of $\mathcal{K}$ that $\sum_{i=1}^{m}\Phi_i(X_i)\in\mathcal{K}$.

Conversely, let $X_1,\cdots,X_m\in\mathcal{K}$ and $C_i\in\mathbb{M}_n$ with $\sum_{i=1}^{m}C_i^*C_i=I$. Define positive linear mappings $\Phi_i$ on $\mathbb{M}_n$ by $\Phi_i(X)=C_i^*XC_i$, \ $(i=1,\cdots,m)$. Then $\{\Phi_i; \ i=1,\cdots, m\}$ is a unital family and so
$$\sum_{i=1}^{m}C_i^*X_iC_i=\sum_{i=1}^{m}\Phi_i(X_i)\in\mathcal{K},$$
i.e., $\mathcal{K}$ is $C^*$-convex.
  \end{proof}
The famous Choi--Davis--Jensen inequality which is a characterization of operator convex functions, can be derived from Theorem \ref{t1} and Theorem \ref{th2}.
\begin{corollary}
 A continuous function $f:J\to\mathbb{R}$ is operator convex if and only if
  \begin{align}\label{q1}
  f\left(\sum_{i=1}^{m}\Phi_i(X_i)\right)\leq \sum_{i=1}^{m}\Phi_i(f(X_i))
 \end{align}
  for every $X_i\in sp(J)$ and every
 unital family $\{\Phi_i\}_{i=1}^{m}$  of positive linear mappings on $\mathbb{M}_n$.
\end{corollary}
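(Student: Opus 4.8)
The plan is to read \eqref{q1} as a statement about the operator epigraph $\mathrm{OE}(f)$ and to transport the equivalence ``$f$ operator convex $\Leftrightarrow$ $\mathrm{OE}(f)$ is $C^*$-convex'' of Theorem \ref{t1} through the spectral reduction used in the proof of Theorem \ref{th2}. The observation driving everything is that for $X_i\in sp(J)$ with spectral decomposition $X_i=\sum_{j}\lambda_{ij}P_{ij}$, the value $f(X_i)=\sum_j f(\lambda_{ij})P_{ij}$ shares the same spectral projections, so that both $\sum_i\Phi_i(X_i)$ and $\sum_i\Phi_i(f(X_i))$ can be written as $C^*$-convex combinations of \emph{scalar} matrices with the \emph{same} coefficients.

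For the direct implication I would assume $f$ is operator convex, so that $\mathrm{OE}(f)$ is $C^*$-convex by Theorem \ref{t1}. Fixing $X_i\in sp(J)$ and a unital family $\{\Phi_i\}_{i=1}^m$, I would write $X_i=\sum_j\lambda_{ij}P_{ij}$ and set $C_{ij}=\sqrt{\Phi_i(P_{ij})}$; the computation from the proof of Theorem \ref{th2} gives $\sum_{i,j}C_{ij}^*C_{ij}=I$ together with
\[
\sum_{i=1}^m\Phi_i(X_i)=\sum_{i,j}C_{ij}^*\lambda_{ij}C_{ij},\qquad
\sum_{i=1}^m\Phi_i(f(X_i))=\sum_{i,j}C_{ij}^*f(\lambda_{ij})C_{ij}.
\]
Since each scalar pair $(\lambda_{ij}I,f(\lambda_{ij})I)=(\lambda_{ij}I,f(\lambda_{ij}I))$ lies in $\mathrm{OE}(f)$, applying the $C^*$-convexity of $\mathrm{OE}(f)$ with the coefficients $C_{ij}$ yields $\big(\sum_i\Phi_i(X_i),\sum_i\Phi_i(f(X_i))\big)\in\mathrm{OE}(f)$, which is precisely \eqref{q1}.

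For the converse I would specialize \eqref{q1} to the family $\Phi_i(X)=C_i^*XC_i$, where $C_i\in\mathbb{M}_n$ satisfy $\sum_i C_i^*C_i=I$; as already noted in the proof of Theorem \ref{th2}, this is a unital family of positive linear maps. Then \eqref{q1} collapses to $f(\sum_i C_i^*X_iC_i)\le\sum_i C_i^*f(X_i)C_i$, the Jensen-type inequality, and feeding this into the converse half of Theorem \ref{t1} shows that $\mathrm{OE}(f)$ is $C^*$-convex, hence that $f$ is operator convex.

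The only delicate point is that Theorem \ref{th2} is stated for subsets of $\mathbb{H}_n$, whereas $\mathrm{OE}(f)$ sits in the module $\mathbb{M}_n\times\mathbb{M}_n$, so I expect the main work to be verifying that the scalar-reduction argument genuinely carries over to this product setting. This is exactly what the commuting relation $f(X_i)=\sum_j f(\lambda_{ij})P_{ij}$ secures: because both coordinates are diagonalized by one and the same family of projections, a single choice of coefficients $C_{ij}$ linearizes the two sums simultaneously, so no independent control over the second coordinate is needed and the one-dimensional epigraph machinery suffices.
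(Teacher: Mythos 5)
Your proof is correct, and while it follows the paper's overall architecture (reduce everything to Theorem \ref{t1} and the spectral/congruence tricks of Theorem \ref{th2}), the forward direction is handled genuinely differently — and more carefully — than in the paper. The paper simply cites Theorem \ref{th2} to conclude $\sum_i\Phi_i(X_i,f(X_i))\in\mathrm{OE}(f)$, even though Theorem \ref{th2} is stated only for subsets of $\mathbb{H}_n$ and its proof rests on taking the spectral decomposition of a single matrix; extending it to pairs would in general require simultaneous diagonalization of both coordinates, which is unavailable for arbitrary elements of $\mathrm{OE}(f)$. You identified exactly this issue and resolved it by re-running the scalar reduction inside the product module: since $f(X_i)=\sum_j f(\lambda_{ij})P_{ij}$ shares the projections of $X_i$, the single family $C_{ij}=\sqrt{\Phi_i(P_{ij})}$ linearizes both sums at once, and the $C^*$-convexity of $\mathrm{OE}(f)$ applied to the scalar pairs $(\lambda_{ij}I,f(\lambda_{ij})I)$ gives \eqref{q1}. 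So your argument actually fills a small gap that the paper's citation glosses over. Your converse is essentially the paper's, only more direct: instead of invoking Theorem \ref{th2} to recover $C^*$-convexity of $\mathrm{OE}(f)$ from closure under positive maps, you specialize $\Phi_i(X)=C_i^*XC_i$ and land immediately on the Jensen-type inequality $f\left(\sum_i C_i^*X_iC_i\right)\leq\sum_i C_i^*f(X_i)C_i$, which characterizes operator convexity; the intermediate remark that this yields $C^*$-convexity of $\mathrm{OE}(f)$ is a harmless detour. The paper's route buys brevity by reusing Theorem \ref{th2} as a black box (at the cost of applying it slightly outside its stated scope); yours buys a self-contained and strictly rigorous forward implication at the cost of repeating a computation already present in the proof of Theorem \ref{th2}.
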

\begin{proof}
  Let $f:J\to\mathbb{R}$ be operator convex. Then $\mathrm{OE}(f)$ is $C^*$-convex by Theorem \ref{t1}. For every $X_i\in sp(J)$, we have $(X_i,f(X_i))\in \mathrm{OE}(f)$. Theorem \ref{th2} then implies that $\sum_{i=1}^{m}\Phi_i(X_i,f(X_i))\in \mathrm{OE}(f)$ for every unital family $\{\Phi_i\}_{i=1}^{m}$   of positive linear mappings. Hence \eqref{q1} holds true. Conversely, assume that \eqref{q1} is valid. Let $(X_i,Y_i)\in \mathrm{OE}(f)$,\ $(i=1,\cdots, m)$ so that $f(X_i)\leq Y_i$. If $\{\Phi_i\}_{i=1}^{m}$ is a  unital family of positive linear mappings, then
  \begin{align*}
    f\left(\sum_{i=1}^{m}\Phi_i(X_i)\right)&\leq \sum_{i=1}^{m}\Phi_i(f(X_i))\quad (\mbox{by \eqref{q1}})\\
    &\leq \sum_{i=1}^{m}\Phi_i(Y_i)\qquad (\mbox{by  $f(X_i)\leq Y_i$}).
  \end{align*}
  This concludes that $\sum_{i=1}^{m}\Phi_i(X_i,Y_i)\in \mathrm{OE}(f)$. Theorem \ref{th2} now implies that $\mathrm{OE}(f)$ is $C^*$-convex and so $f$ is operator convex by Theorem \ref{t1}.

\end{proof}

\begin{remark}
Let $\mathcal{K}\subseteq\mathcal{B}(\mathcal{H})^+$ be  $C^*$-convex  and $0\in \mathcal{K}$. If $C\in\mathcal{B}(\mathcal{H})$ is a contraction, then $C^*XC\in\mathcal{K}$ for every $X\in\mathcal{K}$. To see this, put $D=(I-C^*C)^{\frac{1}{2}}$. Then $C^*C+D^*D=I$ and so $C^*XC=C^*XC+D^*0D\in\mathcal{K}$ for every $X\in\mathcal{K}$. It follows that if $\sum_{i=1}^{n}X_i\in\mathcal{K}$, then $X_i\in\mathcal{K}$ for $i=1,\cdots,n$.  For if $X,Y\in\mathcal{K}$, put $C_1=(X+Y)^{-\frac{1}{2}}X^{\frac{1}{2}}$ and $C_2=(X+Y)^{-\frac{1}{2}}Y^{\frac{1}{2}}$ which are contractions and
\begin{eqnarray*}
  X=C_1^*(X+Y)C_1 \ \ \ \ \ Y=C_2^*(X+Y)C_2.
\end{eqnarray*}
\end{remark}
\begin{definition}
  We say that  a set $\mathcal{K}\subseteq\mathcal{B}(\mathcal{H})$ is $C^*$-log-convex if $\left(\sum_{i=1}^{n}C_i^*X_i^{-1}C_i\right)^{-1}\in\mathcal{K}$ for all $X_i\in\mathcal{K}$ and  $C_i\in\mathcal{B}(\mathcal{H})$ with $\sum_{i=1}^{n}C_i^*C_i=I$.
\end{definition}
 If $M$ is a positive scalar, then $\{X\in\mathcal{B}(\mathcal{H}); \ 0<X\leq M\}$ is an obvious example for $C^*$-log-convex sets. Moreover, if $\mathcal{K}\subseteq\mathcal{B}(\mathcal{H})$ is   $C^*$-log-convex, then
 $\mathcal{K}^{-1}=\{X^{-1};\ X\in\mathcal{K}\}$ is convex in the usual sense. For if $X,Y\in\mathcal{K}^{-1}$ and $\lambda\in[0,1]$, with $C_1=\sqrt{\lambda}I$ and $C_2=\sqrt{1-\lambda}I$ we have $C_1^*C_1+C_2^*C_2=I$ and so
 $(C_1^*XC_1+C_2^*YC_2)^{-1}\in\mathcal{K}$. This follows that $\lambda X+(1-\lambda)Y=C_1^*XC_1+C_2^*YC_2\in\mathcal{K}^{-1}$.  More generally, a set  $\mathcal{K}\subseteq\mathcal{B}(\mathcal{H})$ is a $C^*$-log-convex set if and only if $\mathcal{K}\subseteq \mathrm{Inv}(\mathcal{B}(\mathcal{H}))$ and $\mathcal{K}^{-1}$ is a $C^*$-convex set, where we mean by $\mathrm{Inv}(\mathcal{B}(\mathcal{H}))$  the set of invertible elements in $\mathcal{B}(\mathcal{H})$.
\begin{proposition}
  If $\mathcal{L}$ and $\mathcal{K}$ are $C^*$-log-convex sets, then so is
    $$\left(\mathcal{K}^{-1}+\mathcal{L}^{-1}\right)^{-1}=\left\{\left(X^{-1}+Y^{-1}\right)^{-1};\ \ X\in\mathcal{K},\ Y\in\mathcal{L}\right\}.$$
\end{proposition}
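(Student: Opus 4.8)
The plan is to reduce everything to the characterization recorded just before the statement: a set $\mathcal{S}\subseteq\mathcal{B}(\mathcal{H})$ is $C^*$-log-convex if and only if $\mathcal{S}\subseteq\mathrm{Inv}(\mathcal{B}(\mathcal{H}))$ and $\mathcal{S}^{-1}$ is $C^*$-convex. Writing $\mathcal{S}=\left(\mathcal{K}^{-1}+\mathcal{L}^{-1}\right)^{-1}$, I would first pin down $\mathcal{S}^{-1}$: a typical element of $\mathcal{S}$ is $\left(X^{-1}+Y^{-1}\right)^{-1}$ with $X\in\mathcal{K}$ and $Y\in\mathcal{L}$, whose inverse is $X^{-1}+Y^{-1}$, so that $\mathcal{S}^{-1}=\mathcal{K}^{-1}+\mathcal{L}^{-1}$. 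By the characterization applied to the hypotheses, $\mathcal{K}^{-1}$ and $\mathcal{L}^{-1}$ are $C^*$-convex, so it remains only to show that their Minkowski sum is $C^*$-convex and that $\mathcal{S}$ consists of invertible operators.

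The central step is that the sum of two $C^*$-convex sets is again $C^*$-convex. Given $Z_i\in\mathcal{K}^{-1}+\mathcal{L}^{-1}$, I would write $Z_i=A_i+B_i$ with $A_i\in\mathcal{K}^{-1}$ and $B_i\in\mathcal{L}^{-1}$, and take $C_i\in\mathcal{B}(\mathcal{H})$ with $\sum_{i=1}^{n}C_i^*C_i=I$. The $C^*$-convex combination then distributes over the sum,
\begin{align*}
\sum_{i=1}^{n}C_i^*Z_iC_i=\sum_{i=1}^{n}C_i^*A_iC_i+\sum_{i=1}^{n}C_i^*B_iC_i,
\end{align*}
and the two terms on the right lie in $\mathcal{K}^{-1}$ and $\mathcal{L}^{-1}$ respectively, by the $C^*$-convexity of these sets; hence the left-hand side lies in $\mathcal{K}^{-1}+\mathcal{L}^{-1}=\mathcal{S}^{-1}$. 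This shows $\mathcal{S}^{-1}$ is $C^*$-convex.

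It remains to verify the invertibility condition $\mathcal{S}\subseteq\mathrm{Inv}(\mathcal{B}(\mathcal{H}))$. Since $\mathcal{K}$ and $\mathcal{L}$ are $C^*$-log-convex, their members are strictly positive operators (the ambient setting for $C^*$-log-convexity, consistent with the examples above), hence so are the members of $\mathcal{K}^{-1}$ and $\mathcal{L}^{-1}$; the sum of two strictly positive operators is again strictly positive, in particular invertible. Thus $\mathcal{S}^{-1}=\mathcal{K}^{-1}+\mathcal{L}^{-1}$ is a set of invertible operators, $\mathcal{S}=(\mathcal{S}^{-1})^{-1}$ is well defined, and $\mathcal{S}\subseteq\mathrm{Inv}(\mathcal{B}(\mathcal{H}))$. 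Applying the characterization in the reverse direction then yields that $\mathcal{S}$ is $C^*$-log-convex. I do not expect a serious obstacle: the only point requiring care is the invertibility of $\mathcal{S}^{-1}$, which is precisely where strict positivity of the members of $\mathcal{K}$ and $\mathcal{L}$ must be used rather than mere invertibility, since a sum of two merely invertible operators need not be invertible.
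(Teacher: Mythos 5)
Your proof is correct, and the underlying computation is the same one the paper uses: a $C^*$-combination distributes over the sum $X^{-1}+Y^{-1}$, and each summand stays in its respective set. The packaging differs, though. The paper verifies the definition of $C^*$-log-convexity directly: given $Z_i=\left(X_i^{-1}+Y_i^{-1}\right)^{-1}$, it computes $\left(\sum_{i}C_i^*Z_i^{-1}C_i\right)^{-1}=\left(\sum_{i}C_i^*X_i^{-1}C_i+\sum_{i}C_i^*Y_i^{-1}C_i\right)^{-1}$ and recognizes this as $\left(A^{-1}+B^{-1}\right)^{-1}$ with $A=\left(\sum_{i}C_i^*X_i^{-1}C_i\right)^{-1}\in\mathcal{K}$ and $B=\left(\sum_{i}C_i^*Y_i^{-1}C_i\right)^{-1}\in\mathcal{L}$. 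You instead route through the remark preceding the statement ($\mathcal{S}$ is $C^*$-log-convex iff $\mathcal{S}\subseteq\mathrm{Inv}(\mathcal{B}(\mathcal{H}))$ and $\mathcal{S}^{-1}$ is $C^*$-convex) and isolate the general fact that the Minkowski sum of two $C^*$-convex sets is $C^*$-convex. Your version is more modular and reusable, and it has the merit of explicitly addressing the invertibility of $X^{-1}+Y^{-1}$ via strict positivity, a point the paper silently assumes by working in $\mathcal{B}(\mathcal{H})^+$; the paper's version is shorter and self-contained, needing only the definition rather than the unproved equivalence from the remark. Either is acceptable; just note that if you lean on the remark's characterization, you are relying on a statement the paper asserts without proof.
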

\begin{proof}
  Assume that  $C_i\in\mathcal{B}(\mathcal{H})$ with $\sum_{i=1}^{n}C_i^*C_i=I$. If $Z_1,\cdots,Z_n\in\left(\mathcal{K}^{-1}+\mathcal{L}^{-1}\right)^{-1}$, then $Z_i=\left(X_i^{-1}+Y_i^{-1}\right)^{-1}$ for some $X_i\in\mathcal{K}$ and $Y_i\in\mathcal{L}$,\  $(i=1,\cdots,n)$. It follows from the $C^*$-log-convexity of $\mathcal{K}$ and $\mathcal{L}$ that
  $\left(\sum_{i=1}^{n}C_i^*X_i^{-1}C_i\right)^{-1}\in \mathcal{K}$ and $\left(\sum_{i=1}^{n}C_i^*Y_i^{-1}C_i\right)^{-1}\in\mathcal{L}$. Therefore,
  {\small\begin{align*}
 \left(\sum_{i=1}^{n}C_i^*Z_i^{-1}C_i\right)^{-1}&=
  \left(\sum_{i=1}^{n}C_i^*\left(X_i^{-1}+Y_i^{-1}\right)C_i\right)^{-1}\\
  &=
  \left(\sum_{i=1}^{n}C_i^*X_i^{-1}C_i+\sum_{i=1}^{n}C_i^*Y_i^{-1}C_i\right)^{-1}\in\left(\mathcal{K}^{-1}
  +\mathcal{L}^{-1}\right)^{-1},
  \end{align*}}
  which implies that $\left(\mathcal{K}^{-1}+\mathcal{L}^{-1}\right)^{-1}$ is $C^*$-log-convex.
\end{proof}
\begin{proposition}
Let   $\mathcal{K}\subseteq\mathcal{B}(\mathcal{H})$ be inverse closed in the sense  that  $ \mathcal{K}^{-1}\subseteq\mathcal{K}$. If $\mathcal{K}$ is $C^*$-log-convex, then it is $C^*$-convex.
\end{proposition}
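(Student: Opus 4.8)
The plan is to derive the $C^*$-convexity of $\mathcal{K}$ by wrapping a single use of the $C^*$-log-convexity hypothesis between two applications of the inverse-closed assumption. Fix operators $X_1,\dots,X_n\in\mathcal{K}$ and $C_1,\dots,C_n\in\mathcal{B}(\mathcal{H})$ with $\sum_{i=1}^{n}C_i^*C_i=I$; the target is the membership $\sum_{i=1}^{n}C_i^*X_iC_i\in\mathcal{K}$.

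First I would use $\mathcal{K}^{-1}\subseteq\mathcal{K}$ to pass from each $X_i$ to its inverse, so that $X_i^{-1}\in\mathcal{K}$ for every $i$. Feeding the family $X_1^{-1},\dots,X_n^{-1}$ into the definition of $C^*$-log-convexity then produces
$$\left(\sum_{i=1}^{n}C_i^*\bigl(X_i^{-1}\bigr)^{-1}C_i\right)^{-1}=\left(\sum_{i=1}^{n}C_i^*X_iC_i\right)^{-1}\in\mathcal{K}.$$
Applying inverse-closedness a second time turns this into $\sum_{i=1}^{n}C_i^*X_iC_i\in\mathcal{K}$, which is exactly the conclusion. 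Thus the whole argument is two invocations of $\mathcal{K}^{-1}\subseteq\mathcal{K}$ flanking one invocation of the log-convex definition.

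The one point that genuinely deserves attention is the well-definedness of the inverses above: the log-convex expression only makes sense once $\sum_{i=1}^{n}C_i^*X_iC_i$ is known to be invertible. Since $C^*$-log-convexity is set in the regime of strictly positive operators, each $X_i$ satisfies $X_i\ge\varepsilon_i I$ for some $\varepsilon_i>0$, and with $\varepsilon=\min_i\varepsilon_i$ the relation $\sum_{i=1}^{n}C_i^*C_i=I$ gives $\sum_{i=1}^{n}C_i^*X_iC_i\ge\varepsilon I>0$; hence the operator is invertible and every step is legitimate. I do not expect any real obstacle beyond this bookkeeping.

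An equivalent and perhaps more transparent route avoids even this check. Because inversion is an involution on the invertible elements, the inclusion $\mathcal{K}^{-1}\subseteq\mathcal{K}$ upgrades to the equality $\mathcal{K}=\mathcal{K}^{-1}$. Invoking the characterization recalled earlier---that $\mathcal{K}$ is $C^*$-log-convex precisely when $\mathcal{K}\subseteq\mathrm{Inv}(\mathcal{B}(\mathcal{H}))$ and $\mathcal{K}^{-1}$ is $C^*$-convex---the hypothesis says $\mathcal{K}^{-1}$ is $C^*$-convex, and since $\mathcal{K}=\mathcal{K}^{-1}$ this is the same as saying $\mathcal{K}$ is $C^*$-convex. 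In this phrasing the proposition collapses to a one-line consequence of the earlier remark.
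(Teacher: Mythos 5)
Your first argument is exactly the paper's proof: invert each $X_i$ via $\mathcal{K}^{-1}\subseteq\mathcal{K}$, apply the $C^*$-log-convexity definition to the family $X_i^{-1}$, and invert once more to land on $\sum_{i=1}^{n}C_i^*X_iC_i\in\mathcal{K}$. Both your invertibility remark and the alternative route through $\mathcal{K}=\mathcal{K}^{-1}$ and the earlier characterization are correct but go beyond what the paper records; the core argument is the same.
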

\begin{proof}
Assume that   $\mathcal{K}$ is a $C^*$-log-convex set with $ \mathcal{K}^{-1}\subseteq\mathcal{K}$. Let $C_i\in\mathcal{B}(\mathcal{H})$ and  $\sum_{i=1}^{n}C_i^*C_i=I$.  If $X_i\in\mathcal{K}$, then $X_i^{-1}\in\mathcal{K}$ and we have from the $C^*$-log-convexity of $\mathcal{K}$ that
 $\left(\sum_{i=1}^{n}C_i^*X_iC_i\right)^{-1}\in\mathcal{K}$. It follows that $\sum_{i=1}^{n}C_i^*X_iC_i\in\mathcal{K}$ and so $\mathcal{K}$ is $C^*$-convex.
\end{proof}

The convex hull of a set $\mathcal{K}$ in a vector space $\mathcal{X}$ is defined to be the smallest convex set in $\mathcal{X}$ containing $\mathcal{K}$. It is known that  the convex hull of  $\mathcal{X}$ is the set
\begin{align}\label{conv}
{\rm CH}(\mathcal{K})=\left\{\sum_{i=1}^{m}t_ia_i; \ a_i\in \mathcal{K},\ m\in\mathbb{N},  \ \sum_{i=1}^{m}t_i=1\right\}.
\end{align}
The  $C^*$-convex hull \cite{Loebl} of  a  set $\mathcal{K}\subseteq\mathcal{B}(\mathcal{H})$ is the smallest $C^*$-convex set in $\mathcal{B}(\mathcal{H})$ which contains $\mathcal{K}$. This is the generalization of convex hull in the non-commutative setting. It is  known  \cite[Corollary 20]{Loebl} that  given $T\in\mathcal{B}(\mathcal{H})$,  the $C^*$-convex hull of $\{T\}$ is the set
$$C^*{-\rm CH}(T)=\left\{\sum_{i}C_i^*TC_i;\ \sum_{i}C_i^*C_i=I \right\}.$$
Moreover, we define the $C^*$-log-convex hull of a  set $\mathcal{K}\subseteq\mathcal{B}(\mathcal{H})$ to be the smallest $C^*$-log-convex set in $\mathcal{B}(\mathcal{H})$ which contains $\mathcal{K}$. It is easy to see that if $T\in\mathcal{B}(\mathcal{H})$ and $T>0$, then the $C^*$-log-convex hull of $\{T\}$ turns out to be
$$C^*{-\rm LCH}(T)=\left\{\left(\sum_{i}C_i^*T^{-1}C_i\right)^{-1};\ \sum_{i}C_i^*C_i=I \right\}.$$
When  $T\in\mathbb{H}_n$,  we can present the $C^*$-convex hull of $\{T\}$ in terms of  its eigenvalues.
\begin{theorem}\label{th3}
  If $\lambda_1,\cdots,\lambda_n$ are eigenvalues of $T\in\mathbb{H}_n$, then
\begin{align*}
C^*{-\rm CH}(T)=\left\{\sum_{i=1}^{n}\lambda_iE_i;\quad E_i\geq0,\  i=1,\cdots,n ,\ \ \sum_{i=1}^{n}E_i=I\right\}.
  \end{align*}
\end{theorem}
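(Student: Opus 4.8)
The plan is to prove the two inclusions separately, working throughout with the explicit description $C^*{-\rm CH}(T)=\left\{\sum_{j}C_j^*TC_j;\ \sum_{j}C_j^*C_j=I\right\}$ recalled above, together with the spectral theorem written as $T=U^*DU$, where $U$ is unitary and $D=\mathrm{diag}(\lambda_1,\cdots,\lambda_n)$ with the eigenvalues listed according to multiplicity so that $D$ genuinely has $n$ diagonal entries.

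For the inclusion $C^*{-\rm CH}(T)\subseteq\left\{\sum_{i}\lambda_iE_i;\ E_i\geq0,\ \sum_i E_i=I\right\}$, I would take a generic element $\sum_j C_j^*TC_j$ with $\sum_j C_j^*C_j=I$, substitute $T=U^*DU$, and absorb the unitary into the coefficients by setting $B_j=UC_j$; note that $\sum_j B_j^*B_j=\sum_j C_j^*C_j=I$ is preserved. Writing $D=\sum_i\lambda_i e_ie_i^*$ in the standard basis $\{e_i\}$ and expanding $B_j^*DB_j=\sum_i\lambda_i(e_i^*B_j)^*(e_i^*B_j)$, the whole sum rearranges as $\sum_i\lambda_i E_i$ with $E_i:=\sum_j(e_i^*B_j)^*(e_i^*B_j)\geq0$. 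The normalization $\sum_i E_i=I$ then follows by interchanging the order of summation and using $\sum_i e_ie_i^*=I$ together with $\sum_j B_j^*B_j=I$. This direction is a direct computation.

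For the reverse inclusion, the crucial observation is that each $\lambda_iI$ already lies in $C^*{-\rm CH}(T)$. This is exactly the argument appearing in the proof of Theorem \ref{th2}: since $\lambda_i$ is an eigenvalue of $T$, there is a unitary $U$ and an index $k$ with $(U^*TU)_{kk}=\lambda_i$, whence $\lambda_iI=\sum_l(UE_{kl})^*T(UE_{kl})$ with $\sum_l(UE_{kl})^*(UE_{kl})=I$, where $\{E_{kl}\}$ is the system of unit matrices. Granting this, for any $\sum_i\lambda_i E_i$ with $E_i\geq0$ and $\sum_i E_i=I$, I would factor $E_i=E_i^{1/2}E_i^{1/2}$ and observe that $\sum_i\lambda_i E_i=\sum_i(E_i^{1/2})^*(\lambda_iI)E_i^{1/2}$ while $\sum_i(E_i^{1/2})^*E_i^{1/2}=\sum_i E_i=I$. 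Since $C^*{-\rm CH}(T)$ is $C^*$-convex by definition and contains every $\lambda_iI$, this exhibits $\sum_i\lambda_i E_i$ as a $C^*$-convex combination of elements of $C^*{-\rm CH}(T)$, so it belongs to $C^*{-\rm CH}(T)$.

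The main obstacle I anticipate is the reverse inclusion, and within it the step that $\lambda_iI\in C^*{-\rm CH}(T)$; once that is secured, the square-root decomposition $E_i=E_i^{1/2}E_i^{1/2}$ reduces the remaining work to invoking the $C^*$-convexity of the hull. I would take care that the eigenvalue bookkeeping is consistent in both directions, in particular that the multiplicities are handled correctly so that the forward computation produces exactly $n$ positive pieces $E_i$ and the reverse combination uses the matching $\lambda_iI$.
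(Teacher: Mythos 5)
Your argument is correct and follows essentially the same route as the paper: the forward inclusion is the same rearrangement of $\sum_j C_j^*TC_j$ via the spectral decomposition of $T$ (you write it as $U^*DU$ where the paper uses $\sum_j\lambda_jP_j$, but the resulting $E_i$ are the same operators $\sum_j C_j^*P_iC_j$), and the reverse inclusion uses exactly the paper's two steps, namely that each $\lambda_iI$ lies in $C^*{-\rm CH}(T)$ by the unit-matrix trick from Theorem \ref{th2} and that $\sum_i\lambda_iE_i=\sum_i\sqrt{E_i}\,(\lambda_iI)\sqrt{E_i}$ is then a $C^*$-convex combination. Your explicit attention to listing eigenvalues with multiplicity is a sound (and slightly more careful) piece of bookkeeping, but it does not change the substance of the proof.
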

\begin{proof}
Let $T=\sum_{i=1}^{n}\lambda_iP_i$ be the spectral decomposition of $T$.   Put
\begin{align*}
 \Omega=\left\{\sum_{i=1}^{n}\lambda_iE_i;\  E_i\geq0,\  i=1,\ldots,n ,\  \sum_{i=1}^{n}E_i=I\right\}.
    \end{align*}
  If $X\in C^*-{\rm CH}(T)$, then $X=\sum_{i}C_i^*TC_i$ for some $C_i\in\mathbb{M}_n$ with $\sum_{i}C_i^*C_i=I$.   Therefore,
 \begin{align*}
    X=\sum_{i}C_i^*TC_i&= \sum_{i}C_i^*\left(\sum_{j=1}^{n}\lambda_jP_j\right)C_i=\sum_{j=1}^{n}\lambda_j\sum_{i}
    C_i^*P_jC_i.
  \end{align*}
  Putting  $E_j=\sum_{i}C_i^*P_jC_i$, we have $E_j\geq0$,\ $(j=1,\ldots,n)$ and
\begin{align*}
  \sum_{j=1}^{n}E_j=\sum_{j=1}^{n}\sum_{i}C_i^*P_jC_i=\sum_{i}C_i^*\left(\sum_{j=1}^{n}P_j\right)C_i
  =\sum_{i}C_i^*C_i=I.
    \end{align*}
 Hence, $X= \sum_{j=1}^{n}\lambda_jE_j$ and $\sum_{j=1}^{n}E_j=I$, i.e., $X\in\Omega$.

For the converse, note that the $C^*{-\rm CH}(T)$ is $C^*$-convex and contains all eigenvalues of $T$. Now if $X=\sum_{j=1}^{n}\lambda_jE_j$ in which $\sum_{j=1}^{n}E_j=I$ and $E_j\geq0$,\ $(j=1,\cdots,n)$, then
\begin{align*}
X=\sum_{j=1}^{n}\lambda_jE_j=\sum_{j=1}^{n}\sqrt{E_j}\lambda_j\sqrt{E_j}\in C^*{-\rm CH}(T),
\end{align*}
   by $C^*$-convexity of $C^*{-\rm CH}(T)$.
\end{proof}
Let $f:J\to\mathbb{R}$ be a continuous function. If $T\in\mathbb{H}_n$ has the spectral decomposition $T=\sum_{i=1}^{n}\lambda_iP_i$ in which the eigenvalues  $\lambda_1,\cdots,\lambda_n$  are contained in $J$, then the well known functional calculus  yields that $f(T)=\sum_{i=1}^{n}f(\lambda_i)P_i$. By use of Theorem \ref{th3}, the $C^*$-convex hull of $f(T)$ turns to be
\begin{align*}
C^*{-\rm CH}(f(T))=\left\{\sum_{i=1}^{n}f(\lambda_i)E_i;\quad E_i\geq0,\  i=1,\cdots,n ,\ \ \sum_{i=1}^{n}E_i=I\right\}.
  \end{align*}

The next result reveals the reason of naming   $C^*$-log-convex sets. First note that, the notion of $C^*$-log-convexity can be extended to subsets of an algebra with a $\mathcal{B}(\mathcal{H})$-module structure. For example, a   set   $\mathcal{K}\subseteq\mathcal{B}(\mathcal{H})\times\mathcal{B}(\mathcal{H})$  is called  $C^*$-log-convex  if
$(X_i,Y_i)\in\mathcal{K}$, $C_i\in\mathcal{B}(\mathcal{H})$ and  $\sum_{i=1}^{n}C_i^*C_i=I$ implies that
$$\left(\sum_{i=1}^{n}C_i^*(X_i,Y_i)^{-1}C_i\right)^{-1}=\left(\left(\sum_{i=1}^{n}C_i^*X_i^{-1}C_i\right)^{-1},
\left(\sum_{i=1}^{n}C_i^*Y_i^{-1}C_i\right)^{-1}\right)\in\mathcal{K}.$$

\begin{theorem}
  A continuous function $f:(0,\infty)\to(0,\infty)$ is operator log-convex if and only if the set $\mathcal{K}=\{(X,Y)\in\mathcal{B}(\mathcal{H})^+\times\mathcal{B}(\mathcal{H})^+; \ \ f(X^{-1})\leq Y\}$ is a $C^*$-log-convex set.
\end{theorem}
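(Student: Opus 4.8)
The plan is to mirror the proof of Theorem \ref{t1}, replacing the ordinary Jensen operator inequality by the sharper inequality \eqref{jensen2} available for operator log-convex functions, and exploiting the fact that inversion reverses the order on strictly positive operators.

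First I would treat the forward implication. Assume $f$ is operator log-convex, take $(X_i,Y_i)\in\mathcal{K}$ and $C_i\in\mathcal{B}(\mathcal{H})$ with $\sum_{i=1}^{n}C_i^*C_i=I$. By definition $f(X_i^{-1})\leq Y_i$, and since inversion reverses the order we get $Y_i^{-1}\leq f(X_i^{-1})^{-1}$, whence $\sum_i C_i^*Y_i^{-1}C_i\leq\sum_i C_i^*f(X_i^{-1})^{-1}C_i$ and, inverting again, $\left(\sum_i C_i^*f(X_i^{-1})^{-1}C_i\right)^{-1}\leq\left(\sum_i C_i^*Y_i^{-1}C_i\right)^{-1}$. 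On the other hand, applying \eqref{jensen2} to the strictly positive operators $X_i^{-1}$ yields $f\left(\sum_i C_i^*X_i^{-1}C_i\right)\leq\left(\sum_i C_i^*f(X_i^{-1})^{-1}C_i\right)^{-1}$. Chaining the two inequalities gives $f\left(\sum_i C_i^*X_i^{-1}C_i\right)\leq\left(\sum_i C_i^*Y_i^{-1}C_i\right)^{-1}$, which is precisely the assertion that the pair $\left(\left(\sum_i C_i^*X_i^{-1}C_i\right)^{-1},\left(\sum_i C_i^*Y_i^{-1}C_i\right)^{-1}\right)$ lies in $\mathcal{K}$; hence $\mathcal{K}$ is $C^*$-log-convex. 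A routine point to verify along the way is that each sum $\sum_i C_i^*X_i^{-1}C_i$ is again strictly positive, so that its inverse is defined: this holds because each $X_i^{-1}$ is bounded below by a positive scalar and $\sum_i C_i^*C_i=I$.

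For the converse, assume $\mathcal{K}$ is $C^*$-log-convex. The idea is to feed in test points that realize equality in the defining inequality of $\mathcal{K}$: for any strictly positive $X_i$ the pair $(X_i,f(X_i^{-1}))$ satisfies $f(X_i^{-1})\leq f(X_i^{-1})$ and so belongs to $\mathcal{K}$. Applying $C^*$-log-convexity to these points and unwinding the definition of $\mathcal{K}$ recovers exactly $f\left(\sum_i C_i^*X_i^{-1}C_i\right)\leq\left(\sum_i C_i^*f(X_i^{-1})^{-1}C_i\right)^{-1}$; writing $Z_i=X_i^{-1}$, which again ranges over all strictly positive operators, this is precisely inequality \eqref{jensen2}.

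The final step is to specialize to $n=2$ with $C_1=C_2=\frac{1}{\sqrt{2}}I$, for which $\sum_i C_i^*Z_iC_i=\frac{Z_1+Z_2}{2}$ while the right-hand side becomes $2\left(f(Z_1)^{-1}+f(Z_2)^{-1}\right)^{-1}$, the harmonic mean of $f(Z_1)$ and $f(Z_2)$. The harmonic--geometric mean inequality for operators bounds this above by $f(Z_1)\sharp f(Z_2)$, giving $f\left(\frac{Z_1+Z_2}{2}\right)\leq f(Z_1)\sharp f(Z_2)$, i.e. $f$ is operator log-convex. I expect the main obstacle to be exactly this last bridge: the $C^*$-log-convex structure of $\mathcal{K}$ naturally produces a harmonic-mean bound, whereas operator log-convexity is phrased through the geometric mean, and the two must be reconciled by invoking harmonic $\leq$ geometric.
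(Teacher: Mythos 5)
Your proof is correct and follows essentially the same route as the paper: the forward direction combines inequality \eqref{jensen2} with the order-reversal of inversion exactly as the paper does, and the converse uses the same test points realizing equality in the defining inequality of $\mathcal{K}$. The only difference is cosmetic: where the paper simply cites \eqref{jensen2} as a characterization of operator log-convexity, you make the final bridge explicit via the harmonic--geometric mean inequality, which is a valid (and slightly more self-contained) way to close the argument.
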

\begin{proof}

 Let $f$ be operator log-convex,   $C_i\in\mathcal{B}(\mathcal{H})$ and
  $\sum_{i=1}^{n}C_i^*C_i=I$. If  $(X_i,Y_i)\in\mathcal{K}$,  then $f\left(X_i^{-1}\right)\leq Y_i$.  It follows from \eqref{jensen2} that
  \begin{align*}
    f\left(\sum_{i=1}^{n}C_i^*X_i^{-1}C_i\right)\leq \left(\sum_{i=1}^{n}C_i^*f\left(X_i^{-1}\right)^{-1}C_i\right)^{-1}\leq \left(\sum_{i=1}^{n}C_i^*Y_i^{-1}C_i\right)^{-1}.
  \end{align*}
  Hence $\left(\left(\sum_{i=1}^{n}C_i^*X_i^{-1}C_i\right)^{-1},\left(\sum_{i=1}^{n}C_i^*Y_i^{-1}C_i\right)^{-1}\right)
\in\mathcal{K}$ and so $\mathcal{K}$ is $C^*$-log-convex.

Conversely, assume that $\mathcal{K}$ is $C^*$-log-convex, $C_i\in\mathcal{B}(\mathcal{H})$ and
  $\sum_{i=1}^{n}C_i^*C_i=I$. If $X_i\in \mathcal{B}(\mathcal{H})^+$\ $(i=1,\cdots,n)$,  then
  $(X_i^{-1},f(X_i))\in \mathcal{K}$. Therefore
  $$\left(\left(\sum_{i=1}^{n}C_i^*X_iC_i\right)^{-1},\left(\sum_{i=1}^{n}C_i^*f(X_i)^{-1}C_i\right)^{-1}\right)
  \in\mathcal{K}$$
  by the $C^*$-log-convexity of $\mathcal{K}$. It follows that
  $$f\left(\sum_{i=1}^{n}C_i^*X_iC_i\right)\leq \left(\sum_{i=1}^{n}C_i^*f(X_i)^{-1}C_i\right)^{-1}$$
  and so $f$ is operator log-convex by \eqref{jensen2}.
\end{proof}


\end{document}